\newtheorem{thm}{Theorem}[section]
\newtheorem{lem}[thm]{Lemma}
\newtheorem{definition}[thm]{Definition}
\newtheorem{prop}[thm]{Proposition}
\newtheorem{example}[thm]{Example}
\newtheorem{rmk}{Remark}
\renewcommand{\a}{\alpha}
\renewcommand{\b}{\beta}
\newcommand{\ba}{\bar\alpha}
\newcommand{\bb}{\bar\beta}
\newcommand{\ab}{(\alpha,\beta)}
\newcommand{\pab}{\alpha\phi\left(\frac{\beta}{\alpha}\right)}
\newcommand{\gab}{\alpha\phi\left(b^2,\frac{\beta}{\alpha}\right)}
\newcommand{\p}{\phi}
\newcommand{\pt}{\phi_2}
\newcommand{\po}{\phi_1}
\newcommand{\ptt}{\phi_{22}}
\newcommand{\pye}{\phi_{12}}
\newcommand{\RR}{\mathbb R}
\newcommand{\pp}[2]{\frac{\partial{#1}}{\partial{#2}}}
\begin{document}
\title{On a new class of Finsler metrics}

\author{Changtao Yu and Hongmei zhu}
\date{}
\maketitle

\begin{abstract}
In this paper, the geometric meaning of $\ab$-norms is made clear.
On this basis, a new class of Finsler metrics called general
$\ab$-metrics are introduced, which are defined by a Riemannian
metric and a 1-form. These metrics not only generalize $\ab$-metrics
naturally, but also include some metrics structured by R. Bryant.
The spray coefficients formula of some kinds of general
$\ab$-metrics is given and the projective flatness is also
discussed.
\end{abstract}


\section{Introduction}

$\ab$-metrics form a special class of Finsler metrics partly because
they are ``computable''\cite{bacs-cxy-szm-curv}. The researches on
$\ab$-metrics enrich Finsler geometry and the approaches offer
references for further study.

Randers metrics arising from physical applications\cite{rand-onan}
are the simplest $\ab$-metrics. They are expressed in the form
$F=\a+\b$, where $\a=\sqrt{a_{ij}(x)y^iy^j}$ is a Riemannian metric
and $\b=b_i(x)y^i$ is a 1-form with $\|\b\|_\a<1$. The following
Randers metric
\begin{eqnarray}
F=\frac{\sqrt{(1-|x|^2)|y|^2+\langle
x,y\rangle^2}}{1-|x|^2}+\frac{\langle x,y\rangle}{1-|x|^2}
\end{eqnarray}
is called {\em Funk metric}\cite{funk-uber}. It is a projectively
flat Finsler metric on $\mathbb B^n(1)$ with flag curvature
$K=-\frac{1}{4}$. Recall that a Finsler metric $F$ on an open domain
$\mathcal U\subset\RR^n$ is said to be {\em projectively flat}, if
all the geodesics of $F$ are straight lines\cite{css-szm-riem}.

Another important example of $\ab$-metric was given by L.
Berwald\cite{berw-uber},
\begin{eqnarray}
F=\frac{(\sqrt{(1-|x|^2)|y|^2+\langle x,y\rangle^2}+\langle
x,y\rangle)^2}{(1-|x|^2)^2\sqrt{(1-|x|^2)|y|^2+\langle
x,y\rangle^2}}.
\end{eqnarray}
It is of a special kind of $\ab$-metrics in the form
$F=\frac{(\a+\b)^2}{\a}$ with $\|\b\|_\a<1$. Berwald's metric is
also a projectively flat Finsler metric on $\mathbb B^n(1)$ with
flag curvature $K=0$.

The concept of $\ab$-metrics was firstly proposed by M. Matsumoto in
1972 as a direct generalization of Randers metrics\cite{mats-oncr}.
But some basic concepts of $\ab$-metrics were omitted. In section 2,
we make clear the geometric property about the indicatrixes of
$\ab$-metrics. Roughly speaking, a Minkowski norm $F$ is an
$\ab$-norm if and only if the indicatrix of $F$ is a rotation
hypersurface with the rotation axis passing the origin.

The aim of this paper is to study a new class of Finsler metrics
given by
\begin{eqnarray}\label{gab}
F=\gab,
\end{eqnarray}
where $\p=\p(b^2,s)$ is a $C^\infty$ positive function and
$b^2:=\|\b\|^2_\a$. This kind of Finsler metrics generalize
$\ab$-metrics in a natural way. They are a special class of general
$\ab$-metrics which are defined in section 3. But the most important
reason that we are interested in them is that they include some
Finsler metrics constructed by R. Bryant.

Bryant's metrics\cite{brya-some,brya-proj,brya-fins} are rectilinear
Finsler metrics on $S^n$ with flag curvature $K=1$ and given in the
following form with $X\in S^n,Y\in T_XS^n$,
\begin{eqnarray}\label{bryant}
F(X,Y)=\Re\left\{\frac{\sqrt{Q(X,X)Q(Y,Y)-Q(X,Y)^2}}{Q(X,X)}-i\frac{Q(
X,Y)}{Q(X,X)}\right\},
\end{eqnarray}
where
$$Q(X,Y)=x_0y_0+e^{ip_1}x_1y_1+e^{ip_2}x_2y_2+\cdots+e^{ip_n}x_ny_n$$
are complex quadratic forms on $\RR^{n+1}$ for $n\geq2$ with the
parameters satisfying
$$0\leq p_1\leq p_2\leq\cdots\leq p_n<\pi.$$
Note that the branch of the complex square root being used is the
one satisfying $\sqrt{1}=1$ and having the negative real axis as its
branch locus~(cf. \cite{brya-proj}).

The following result is related to Bryant's metrics, where the
constant $r_\mu$ is given by $r_\mu=\frac{1}{\sqrt{-\mu}}$ if
$\mu<0$ and $r_\mu=+\infty$ if $\mu\geq0$.

\begin{thm}\label{thm-3}
The following general $\ab$-metrics are projectively flat on
$\mathbb B^n(r_\mu)$ with $n\geq2$,
\begin{eqnarray}\label{eqn:bry}
F=\Re\frac{\sqrt{(e^{ip}+b^2)\a^2-\b^2}-i\b}{e^{ip}+b^2}\quad(-\frac{\pi}{2}\leq
p\leq\frac{\pi}{2}),
\end{eqnarray}
where $\a$ and $\b$ are given by
\begin{eqnarray}
\a&=&\frac{\sqrt{(1+\mu|x|^2)|y|^2-\mu\langle
x,y\rangle^2}}{1+\mu|x|^2},\label{a}\\
\b&=&\frac{\lambda\langle x,y\rangle+(1+\mu|x|^2)\langle
a,y\rangle-\mu\langle a,x\rangle\langle
x,y\rangle}{(1+\mu|x|^2)^\frac{3}{2}},\label{b}
\end{eqnarray}
in which $\mu$ is the sectional curvature of $\a$, $\lambda$ is a
constant and $a\in\RR^n$ is a constant vector.
\end{thm}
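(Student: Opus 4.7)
The metric $F$ in \eqref{eqn:bry} is a general $\ab$-metric with generating function
\begin{equation*}
\p(b^2,s)=\Re\frac{\sqrt{e^{ip}+b^2-s^2}-is}{e^{ip}+b^2}.
\end{equation*}
My plan is to apply the projective flatness criterion for general $\ab$-metrics that will be developed in the body of the paper, and verify its three ingredients in turn: (i) $\a$ itself is projectively flat, (ii) the covariant derivative of $\b$ with respect to $\a$ has the required closed-conformal structure, and (iii) $\p$ satisfies a specific nonlinear PDE in $(b^2,s)$.

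Step (i) is classical: the $\a$ in \eqref{a} is the standard Beltrami/Klein model of the simply connected space form of constant sectional curvature $\mu$ on $\mathbb B^n(r_\mu)$. A short calculation of its Christoffel symbols yields spray coefficients $\bar G^i=P y^i$ with $P=-\frac{\mu\langle x,y\rangle}{1+\mu|x|^2}$, exhibiting $\a$ as projectively flat.

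For step (ii), I would first observe that $\b$ is exact. Direct differentiation verifies $\b=df$ with
\begin{equation*}
f=-\frac{\lambda}{\mu\sqrt{1+\mu|x|^2}}+\frac{\langle a,x\rangle}{\sqrt{1+\mu|x|^2}}
\end{equation*}
(with an analogous linear choice when $\mu=0$), so $\b$ is closed. A further computation with the Levi-Civita connection of $\a$ should then reduce $b_{i|j}$ to the concircular form $b_{i|j}=c(x)\,a_{ij}$ for an explicit scalar $c(x)$; this is precisely the algebraic interaction between $\b$ and $\a$ that the criterion demands.

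The main obstacle will be step (iii): checking the projective flatness PDE for the complex-valued $\p$. I would set $w:=\sqrt{e^{ip}+b^2-s^2}-is$, so that $(e^{ip}+b^2)\p=\Re\,w$ and $(w+is)^2=e^{ip}+b^2-s^2$, and then compute $\po$, $\pt$, $\ptt$, $\pye$ as rational expressions in $w$, $s$, $b^2$ by implicit differentiation. The PDE should collapse to a polynomial identity in these variables, which ought to hold after simplification. That this is the right shape of argument is strongly suggested by the known projective flatness of Bryant's metrics \eqref{bryant} on $S^n$: the present family is essentially the single-phase inhomogeneous version of \eqref{bryant}, pulled back to the Beltrami coordinates of $\a$ and deformed by the 1-form $\b$.
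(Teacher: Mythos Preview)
Your plan is correct and matches the paper's proof: both verify the hypotheses of Theorem~\ref{thm-2} (projective flatness of $\a$, closed-conformal $\b$, and the PDE~\eqref{8} for $\p$), and the paper likewise computes $b_{i|j}$ directly from the Christoffel symbols of $\a$ to obtain $b_{i|j}=c(x)a_{ij}$ with $c(x)=(1+\mu|x|^2)^{-1/2}(\lambda-\mu\langle a,x\rangle)$.

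One remark on step~(iii): the PDE $\ptt=2(\po-s\pye)$ is \emph{linear} in $\p$, not nonlinear. The paper exploits this by working with the full complex function $\Phi(b^2,s)=\dfrac{\sqrt{e^{ip}+b^2-s^2}-is}{e^{ip}+b^2}$, observing the closed forms
\[
\Phi-s\Phi_2=(e^{ip}+b^2-s^2)^{-1/2},\qquad
\Phi-s\Phi_2+(b^2-s^2)\Phi_{22}=e^{ip}(e^{ip}+b^2-s^2)^{-3/2},
\]
differentiating the first in $b^2$, and reading off $\Phi_{22}=2(\Phi_1-s\Phi_{12})$; taking real parts then gives~\eqref{8} for $\p$. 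This is shorter than your proposed implicit differentiation in $w$, though both reach the same conclusion.
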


\begin{rmk}
When $\mu=0,\lambda=1,a=0$, the general $\ab$-metrics
(\ref{eqn:bry}) belong to Bryant's metrics in some appropriate
coordinate. One can see section 4 for details. At the same time, we
will point out that the previous metrics (\ref{bryant}) are not
always regular on the whole sphere. Recall that a Finsler metric is
said to be {\em regular}, if its fundamental tensor is positive
definite everywhere.
\end{rmk}

Moreover, we provide a sufficient condition for the general
$\ab$-metrics (\ref{gab}) to be projectively flat. In this paper, a
1-form is called {\em conformal} with respect to a Riemannian metric
if its dual vector field with respect to the Riemannian metric is
conformal.

\begin{thm}\label{thm-2}
Let $F=\gab$ be a general $\ab$-metric on a manifold $M$ with
dimension $n\geq2$. Then $F$ is locally projectively flat if the
following conditions hold:
\begin{enumerate}
\item
The function $\phi(b^2,s)$ satisfies the following partial
differential equation
\begin{eqnarray}\label{8}
\phi_{22}=2(\phi_{1}-s\phi_{12}).
\end{eqnarray}
\item
$\a$ is locally projectively flat, $\b$ is closed and conformal with
respect to $\a$.
\end{enumerate}
\end{thm}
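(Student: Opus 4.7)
The plan is to invoke the Rapcs\'ak-type criterion that a Finsler metric is locally projectively flat if and only if its geodesic spray coefficients take the form $G^i=Py^i$ for some scalar function $P=P(x,y)$. I would begin with the spray coefficient formula for general $\ab$-metrics announced in the abstract and to be derived in Section 3, which expresses $G^i$ as $G^i_\a$ plus terms built algebraically from $\phi,\phi_1,\phi_2,\phi_{12},\phi_{22}$ and from the tensors
\[
r_{ij}=\tfrac12(b_{i|j}+b_{j|i}),\qquad s_{ij}=\tfrac12(b_{i|j}-b_{j|i})
\]
together with their contractions with $y^i$ and $b^i$ (here ``$|$'' denotes the Levi-Civita covariant derivative of $\a$).

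The second hypothesis collapses these tensors dramatically. Closedness of $\b$ gives $b_{i|j}=b_{j|i}$; conformality of the dual vector field of $\b$ gives $b_{i|j}+b_{j|i}=2c(x)a_{ij}$; combining, $b_{i|j}=c(x)\,a_{ij}$. Consequently $s_{ij}\equiv0$, $r_{ij}=c\,a_{ij}$, $r_{00}=c\a^2$, $r_0=c\b$, $s^i{}_0=0$, $s_0=0$. The first half of hypothesis (2) gives $G^i_\a=\xi(x,y)\,y^i$ for some scalar $\xi$, since a Riemannian metric is (locally) projectively flat iff it has constant sectional curvature, and its spray coefficients are then proportional to $y^i$.

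Substituting these identities into the spray coefficient formula, every contribution carrying $s_{ij}$, $s_0$, or $s^i{}_0$ drops out, and the remaining non-radial piece assembles into a single term
\[
\Lambda\cdot(b^i\a-s\,y^i),
\]
where the scalar $\Lambda$ is, up to a nonvanishing overall factor, exactly
\[
\phi_{22}-2(\phi_1-s\phi_{12}).
\]
Hypothesis (1) is precisely $\Lambda=0$, so this residual term vanishes. Combined with $G^i_\a=\xi y^i$ this yields $G^i=Py^i$ for some scalar $P$, and Rapcs\'ak's criterion completes the proof.

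The main obstacle is the bookkeeping in the last paragraph: one must have the general spray formula of Section 3 arranged so that the coefficient $\Lambda$ of the single non-radial vector $b^i\a-sy^i$ is isolated cleanly, so that the matching with the partial differential equation (\ref{8}) is transparent. Since that formula is obtained by the standard procedure of differentiating $F^2$ and solving $[F^2]_{x^k y^l}y^k=2[F^2]_{x^l}$ for $G^i$, this is a finite computation rather than a conceptual hurdle; the only point requiring extra care relative to the classical $\ab$-metric case is to retain the $b^2$-dependence through $\phi_1$ and $\phi_{12}$, which is exactly what feeds into the right-hand side of (\ref{8}).
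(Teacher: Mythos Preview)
Your proposal is correct and follows essentially the same route as the paper: invoke the spray coefficient formula of Proposition~\ref{prop:G}, specialize via $b_{i|j}=c(x)a_{ij}$ and $G^i_\a=\theta y^i$, and observe that the surviving $b^i$-coefficient is a nonzero multiple of $\phi_{22}-2(\phi_1-s\phi_{12})$, so hypothesis~(1) forces $G^i=Py^i$. One small bookkeeping point: after substitution the paper finds the residual carrying the PDE factor splits as a $b^i$ piece plus a $y^i$ piece whose coefficient is $-\frac{s\phi+(b^2-s^2)\phi_2}{\phi}$ times that of $\alpha b^i$, not exactly $-s$, so the residual is not literally proportional to $b^i\alpha-sy^i$; this does not affect the argument, since both pieces vanish simultaneously under~(\ref{8}).
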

\begin{rmk}\label{r-3}
Note that $\po$ means the derivation of $\p$ with respect to the
first variable $b^2$. On the other hand, a Riemannian metric $\a$ is
locally projectively flat if and only if it is of constant sectional
curvature by Beltrami's theorem\cite{css-szm-riem}.
\end{rmk}

The projective flatness is connected with the Hilbert's Fourth
Problem. Recently, Z. Shen has characterized all the projectively
flat $\ab$-metrics for dimension $n\geq3$\cite{szm-onpr}. The first
author rewrote the $\ab$-metric $F=\frac{(\a+\b)^2}{\a}$ as
$F=\frac{(\sqrt{1+\bar b^2}\ba+\bb)^2}{\ba}$ in his doctoral
dissertation, where $\ba=(1-b^2)\a,\bb=\sqrt{1-b^2}\b$, and proved
that this kind of Finsler metrics are locally projectively flat if
and only if $\ba$ is locally projectively flat while $\bb$ is closed
and conformal with respect to $\ba$.

Moreover, the first author has classified all the locally
projectively flat $\ab$-metrics for dimension $n\geq3$ in his
doctoral dissertation. The results show that the projective flatness
of an $\ab$-metric always arises from that of some Riemannian metric
by doing some special deformations. Therefore, we claim that the
conditions in Theorem \ref{thm-2} are, in a sense, also a necessary
condition for a non-Randers general $\ab$-metric $F=\gab$ to be
locally projectively flat for $n\geq3$.

To be specific, if $F$ is a non-Randers locally projectively flat
general $\ab$-metric, then $F$ can be represented as $F=\gab$ such
that $\p(b^2,s),\a$ and $\b$ satisfy the conditions in Theorem
\ref{thm-2}. For instance, suppose that $F=\frac{(\a+\b)^2}{\a}$ is
a locally projectively flat $\ab$-metric. In this case, the
corresponding function $\p(s)=(1+s)^2$ does not satisfy Eq.
(\ref{8}). Also $\a$ is not locally projectively flat and $\b$ is
not conformal with respect to $\a$ in general~\cite{szm-onpr}. But
if we rewrite $F$ as $F=\frac{(\sqrt{1+\bar b^2}\ba+\bb)^2}{\ba}$,
then the function $\p(\bar b^2,\bar s)=(\sqrt{1+\bar b^2}+\bar s)^2$
satisfies Eq. (\ref{8}) now. Although $F=\frac{(\a+\b)^2}{\a}$ is
simple in this form, the properties of $\a$ and $\b$ are not so
simple. This phenomenon is similar to that of Randers metrics of
constant flag curvature~\cite{db-cr-szm-zerm}.

\section{The geometric meaning of $\ab$-norms}

Let $V$ be an $n$-dimensional vector space. By definition, an
$\ab$-norm on $V$ is a Minkowski norm expressed in the following
form,
$$F=\a\phi(s),\quad s=\frac{\b}{\a},$$
where $\a=\sqrt{a_{ij}y^iy^j}$ is an Euclidean norm and
$\b=b_iy^i\in V^*$ is a linear functional on $V$. The function
$\phi=\phi(s)$ is a $C^\infty$ positive function on some open
interval $(-b_o,b_o)$ satisfying
$$\phi(s)-s\phi'(s)+(b^2-s^2)\phi''(s)>0,\qquad\forall|s|\leq b<b_o,$$
where $b=:\|\b\|_\a$\cite{css-szm-riem}.

Let $\{e_1,e_2,\cdots,e_n\}$ be an orthonormal basis of $\a$. Then
$$\a(y)=\sqrt{(y^1)^2+(y^2)^2+\cdots+(y^n)^2},\qquad y=y^ie_i\in V\cong\RR^n.$$
It is obvious that the orthogonal group $O(n)$ acting on $V$
preserves $\a$. Conversely, a Minkowski norm on $V$ preserved under
the action of $O(n)$ must be Euclidean. In other words, Euclidean
norms are the most symmetric Minkowski norms.

By considering the symmetry of $\ab$-norms, Theorem \ref{thm-1}
shows that the symmetry of $\ab$-norms is just next to that of
Euclidean norms. Firstly, we give a description of the symmetry of a
Minkowski norm.

\begin{definition}
Let $F$ be a Minkowski norm on an $n$-dimensional vector space $V$
and $G$ be a subgroup of $GL(n,\RR)$. Then $F$ is called {\em
$G$-invariant} if the following condition holds for some affine
coordinate $(y^1,y^2,\cdots,y^n)$ of $V$,
\begin{eqnarray}\label{eqn:g}
F(y^1,y^2,\cdots,y^n)=F((y^1,y^2,\cdots,y^n)g),\qquad\forall y\in
V,\forall g\in G.
\end{eqnarray}
\end{definition}

The symmetry of Minkowski norms should be paid more attentions since
it restricts the global symmetry of Finsler manifolds.

\begin{thm}\label{thm-1}
Let $F$ be a Minkowski norm on a vector space $V$ of dimension
$n\geq2$. Then $F$ is an $\ab$-norm if and only if $F$ is
$G$-invariant, where
$$
G=\left\{g \in GL(n,R)~|~g=\left(%
\begin{array}{cc}
  A & 0 \\
  0 & 1 \\
\end{array}%
\right),~A\in O(n-1) \right\}.
$$
\end{thm}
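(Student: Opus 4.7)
The proof splits into the two directions of the equivalence; the forward direction is a direct verification and the reverse direction reduces to the standard fact that an $O(n-1)$-invariant function depends only on the Euclidean norm.

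For the forward direction, I would pick an $\alpha$-orthonormal basis $\{e_1,\ldots,e_n\}$ of $V$ such that the $\alpha$-dual vector of $\beta$ is a nonnegative multiple of $e_n$. Then in the associated linear coordinates, $\alpha=\sqrt{(y^1)^2+\cdots+(y^n)^2}$ and $\beta=b\,y^n$ with $b=\|\beta\|_\alpha$. Any $g\in G$ sends $(y^1,\ldots,y^{n-1},y^n)\mapsto\bigl((y^1,\ldots,y^{n-1})A,\,y^n\bigr)$ with $A\in O(n-1)$, which preserves both $\alpha$ (being Euclidean) and $\beta$ separately, hence preserves $F=\alpha\phi(\beta/\alpha)$.

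For the reverse direction, suppose $F$ is $G$-invariant in some affine coordinates $(y^1,\ldots,y^n)$. Since $O(n-1)$ acts transitively on the spheres of $\RR^{n-1}$, $F(y)$ depends on $(y^1,\ldots,y^{n-1})$ only through $r:=\sqrt{(y^1)^2+\cdots+(y^{n-1})^2}$, so $F(y)=H(r,y^n)$ for some function $H$. I would then set the Euclidean norm $\alpha(y):=\sqrt{r^2+(y^n)^2}$ and the linear functional $\beta(y):=y^n$, for which $\|\beta\|_\alpha=1$. Positive $1$-homogeneity of $F$ then gives
\[
H(r,y^n)\;=\;\alpha\cdot H\!\left(\sqrt{1-s^2},\,s\right),\qquad s:=\frac{\beta}{\alpha},
\]
so defining $\phi(s):=H(\sqrt{1-s^2},s)$ writes $F$ in the form $\alpha\,\phi(\beta/\alpha)$.

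The main technical point is verifying that $\phi$ satisfies the regularity and convexity axioms of an $\ab$-norm profile function. Because $F$ is even in $y^1$ (via the reflection $e_1\mapsto-e_1$ in $O(n-1)$) and smooth off the origin, a standard Taylor expansion shows that $H$ is smooth as a function of $(r^2,y^n)$; composing with the smooth map $s\mapsto(1-s^2,s)$ then yields smoothness of $\phi$ on an open neighborhood of $[-1,1]$, which is large enough since $b=1$. The convexity inequality $\phi-s\phi'+(b^2-s^2)\phi''>0$ on $|s|\leq b$ is equivalent to positive definiteness of the Hessian of $\tfrac12 F^2$, which holds because $F$ is a Minkowski norm. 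I expect extending $\phi$ smoothly slightly past the endpoints $s=\pm1$ to be the one place requiring a moment's care, but it amounts to routine bookkeeping once the representation $F=\alpha\,\phi(\beta/\alpha)$ is in hand.
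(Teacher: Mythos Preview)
Your proposal is correct and reaches the same conclusion, but the reverse direction is organized differently from the paper's argument, and the difference is worth noting.

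The paper splits into the cases $n\geq3$ and $n=2$. For $n\geq3$ it first restricts $F$ to the hyperplane $y^n=0$, invokes the fact that an $O(n-1)$-invariant Minkowski norm on an $(n-1)$-space (with $n-1\geq2$) must be Euclidean, scales $\alpha$ so that $\alpha|_{y^n=0}=F|_{y^n=0}$, sets $\tilde\phi=F/\alpha$, and then carries out an explicit spherical-coordinate computation using the infinitesimal rotation identities $\tilde\phi_{y^i}y^j-\tilde\phi_{y^j}y^i=0$ to show $\tilde\phi$ depends only on the angle $\theta^1$, i.e.\ on $y^n/\alpha$. The $n=2$ case is handled separately by the reflection symmetry of the indicatrix.

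Your argument replaces the spherical-coordinate calculation by the one-line observation that transitivity of $O(n-1)$ on spheres forces $F$ to factor through $r=\sqrt{(y^1)^2+\cdots+(y^{n-1})^2}$; homogeneity then immediately gives the form $\alpha\phi(\beta/\alpha)$. This is cleaner and, as you implicitly note, handles $n=2$ and $n\geq3$ uniformly. The paper, by contrast, never explicitly addresses smoothness of $\phi$ at the endpoints $s=\pm b$; your invocation of Whitney's even-function theorem (via restricting to the $y^1$-axis so that $H(|y^1|,y^n)$ is smooth and even in $y^1$, hence smooth in $(y^1)^2$) is the right tool here and actually fills a point the paper leaves tacit. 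One small clarification: your sentence ``even in $y^1$'' is doing double duty---you already know $H$ depends only on $r$, and evenness in $y^1$ along the line $y^2=\cdots=y^{n-1}=0$ is what lets you upgrade this to smoothness in $r^2$; it would be worth making that two-step logic explicit.
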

\begin{rmk}
The above theorem is trivial when $n=1$ because every Finsler curve
is of Randers type by the navigation problem.
\end{rmk}
\begin{proof}
Let $F=\pab$ be an $\ab$-norm. Take an orthonormal basis
$\{e_1,e_2,\cdots,e_n\}$ with respect to $\a$, such that
$\ker\b=\mathrm{span}\{e_1,e_2,\cdots,e_{n-1}\}.$ Then
$$F(y)=\sqrt{(y^1)^2+(y^2)^2+\cdots+(y^n)^2}\phi\left(\frac{by^n}{\sqrt{(y^1)^2+(y^2)^2+\cdots+(y^n)^2}}\right),$$
where $y=y^ie_i$ and $b=\|\b\|_\a$. Obviously, $F$ is $G$-invariant.

Conversely, assume that (\ref{eqn:g}) holds for the affine
coordinate $(y^1,y^2,\cdots,y^n)$.\\
Case 1. $n\geq3$.

By restricting $F$ on the linear subspace given by $y^n=0$, one can
obtain an $O(n-1)$-invariant Minkowski norm, which must be Euclidean
by the previous discussions. So we can choose a positive number $a$,
such that the Euclidean norm
$\a=a\sqrt{(y^1)^2+(y^2)^2+\cdots+(y^n)^2}$ on $V$ satisfies
$\a|_{y^n=0}=F|_{y^n=0}$.

For $y\neq0$, define
\begin{eqnarray}\label{eqn:tp}
\tilde\phi(y^1,y^2,\cdots,y^n)=\frac{F(y^1,y^2,\cdots,y^n)}{\a(y^1,y^2,\cdots,y^n)},
\end{eqnarray}
then $\tilde\phi$ is $G$-invariant, i.e.
$$\tilde\p(y^1,y^2,\cdots,y^n)=\tilde\p((y^1,y^2,\cdots,y^n)g),\qquad\forall y\neq0,\forall g\in G.$$
In particular,
$$\tilde\phi(\cos ty^1+\sin ty^2,-\sin ty^1+\cos ty^2,y^3,\cdots,y^n)=\tilde\phi(y^1,y^2,\cdots,y^n).$$
Differentiating the above equality with respect to $t$ and setting
$t=0$, one obtains $\pp{\tilde\phi}{y^1} y^{2} -
\pp{\tilde\phi}{y^2}y^1=0$. The same argument yields
\begin{eqnarray}\label{eqn:sym}
\pp{\tilde\phi}{y^i} y^{j} - \pp{\tilde\phi}{y^j}y^i=0,\qquad 1\leq
i< j\leq n-1.
\end{eqnarray}
Moreover, since $F$ and $\a$ are both positively homogeneous with
degree one, $\tilde\phi$ is positively homogeneous with degree zero,
i.e., $\tilde\phi(\lambda y)=\tilde\phi(y),\forall\lambda>0.$
Differentiating this equality with respect to $\lambda$ and setting
$\lambda=1$, one obtains
\begin{eqnarray}\label{eqn:zero}
\pp{\tilde\phi}{y^i}y^i=0.
\end{eqnarray}
Taking the spherical coordinate transformation
\begin{eqnarray*}
\left\{\begin{array}{ll}
y^1=r\cos\theta^1\cos\theta^2\cdots\cos\theta^{n-2}\cos\theta^{n-1},\\
y^2=r\cos\theta^1\cos\theta^2\cdots\cos\theta^{n-2}\sin\theta^{n-1},\\
\ \ \ \ \ \ \ \cdots\\
y^{n-1}=r\cos\theta^1\sin\theta^2,\\
y^n=r\sin\theta^1,
\end{array}\right.
\end{eqnarray*}
where
$r>0,-\frac{\pi}{2}\leq\theta^\gamma\leq\frac{\pi}{2}(\gamma=1,\cdots,n-2),0\leq\theta^{n-1}<2\pi$,
and using (\ref{eqn:sym}) (\ref{eqn:zero}), we have
\begin{eqnarray*}
\pp{\tilde\phi}{r}&=&\pp{\tilde\phi}{y^i}\pp{y^i}{r}=\pp{\tilde\phi}{y^i}\frac{y^i}{r}=0,\\
\pp{\tilde\phi}{\theta^\gamma}&=&-\pp{\tilde\phi}{y^1}y^{n-\gamma+1}
\cos\theta^{\gamma+1}\cdots\cos\theta^{n-2}\cos\theta^{n-1}\\
&&-\pp{\tilde\phi}{y^2}y^{n-\gamma+1}\cos\theta^{\gamma+1}\cdots\cos\theta^{n-2}\sin\theta^{n-1}-\cdots\\
&&-\pp{\tilde\phi}{y^{n-\gamma}}y^{n-\gamma+1}\sin\theta^{\gamma+1}
+\pp{\tilde\phi}{y^{n-\gamma+1}}r\cos\theta^1\cdots\cos\theta^\gamma\\
&=&-\pp{\tilde\phi}{y^{n-\gamma+1}}y^1\cos\theta^{\gamma+1}\cdots\cos\theta^{n-2}\cos\theta^{n-1}\\
&&-\pp{\tilde\phi}{y^{n-\gamma+1}}y^2\cos\theta^{\gamma+1}\cdots\cos\theta^{n-2}\sin\theta^{n-1}-\cdots\\
&&-\pp{\tilde\phi}{y^{n-\gamma+1}}y^{n-\gamma}\sin\theta^{\gamma+1}
+\pp{\tilde\phi}{y^{n-\gamma+1}}r\cos\theta^1\cdots\cos\theta^\gamma\\
&=&0,\qquad\gamma=2,\cdots,n-2,\\
\pp{\tilde\phi}{\theta^{n-1}}&=&-\pp{\tilde\phi}{y^1}y^2+\pp{\tilde\phi}{y^2}y^1=0.
\end{eqnarray*}
So $\tilde\phi=\tilde\phi(\theta^1)=\phi\left(\frac{y^n}{\a}\right)$
where the function $\p(s)=\tilde\p(\arcsin as)$, which means
$F=\a\phi\left(\frac{y^n}{\a}\right)$ is an
$\ab$-norm.\\
Case 2. $n=2$.

In this case, (\ref{eqn:g}) is equivalent to
$F(y^1,y^2)=F(-y^1,y^2),\forall y\in V.$ This equation implies that
the indicatrix of $F$ is reflection symmetric with respect to
$y^2$-axis. It is easy to see that it means that the function
defined by (\ref{eqn:tp}) has the form
$\tilde\phi=\phi\left(\frac{y^2}{\a}\right)$ for some function
$\phi$.
\end{proof}

\begin{rmk}
(\ref{eqn:tp}) shows that the function $\phi(s)$ contains the
informations about the shape of the indicatrix.
\end{rmk}

By Zermelo's viewpoint~\cite{db-cr-szm-zerm}, we can obtain new
Minkowski norms by shifting the indicatrix of an $\ab$-norm. We call
them {\em navigation $\ab$-norms}. The indicatrix of a navigation
$\ab$-norm is still a rotation hypersurface, but the rotation axis
does not pass the origin in general.

There will not be more discussions about this kind of Minkowski
norms in this paper. It shouldn't be omitted if one study the
properties of $\ab$-metrics besides Randers
metrics\cite{mxh-hlb-oncu,zlf-aloc}, although it may be very
complicated in algebraic form.

\section{General $\ab$-metrics}

Suppose that $F$ is a Finsler metric on a manifold $M$ such that
$F(x,y)$ is an $\ab$-norm on $T_xM$ for any $x\in M$. $F$ is not an
$\ab$-metric in general. This is because the shape of the indicatrix
for different point may be different. This observation leads to the
following definition.

\begin{definition}
Let $F$ be a Finsler metric on a manifold $M$. $F$ is called a {\em
general $\ab$-metric}, if $F$ can be expressed as the form
$F=\a\phi\left(x,\frac{\b}{\a}\right)$ for some $C^\infty$ function
$\phi(x,s)$ where $x\in M$, some Riemannian metric $\a$ and some
1-form $\b$. $F$ is called a {\em (special) $\ab$-metric}, if $F$
can be expressed as $F=\pab$ for some $C^\infty$ function $\phi(s)$,
some Riemannian metric $\a$ and some 1-form $\b$.
\end{definition}

The Finsler metrics in the form (\ref{gab}) become the simplest
class of general $\ab$-metrics except for special $\ab$-metrics.
$\phi(b^2,s)$ is a positive $C^\infty$ function with $b^2,s$ as its
variables and $|s|\leq b<b_o$ as its definitional domain for some
$0<b_o\leq+\infty$. We use $b^2$ instead of $b$ as the first
variable, partly because it is convenient for computations. In the
rest part of this paper, we will focus on this special kind of
general $\ab$-metrics. Firstly, we can obtain the basic facts of the
general $\ab$-metrics immediately from the corresponding ones of
$\ab$-metrics given in \cite{css-szm-riem}.

\begin{prop}\label{fand}
For a general $\ab$-metric $F=\gab$, the fundamental tensor is given
by
$$g_{ij}=\rho a_{ij}+\rho_0b_ib_j+\rho_1(b_i\a_{y^j}+b_j\a_{y^i})-s\rho_1\a_{y^i}\a_{y^j},$$
where
$$\rho=\p(\p-s\pt),\quad\rho_0=\p\ptt+\pt\pt,\quad\rho_1=(\p-s\pt)\pt-s\p\ptt.$$
Moreover,
$$\det(g_{ij})=\p^{n+1}(\p-s\pt)^{n-2}\big(\p-s\pt+(b^2-s^2)\ptt\big)\det(a_{ij}),$$
$$g^{ij}=\rho^{-1}\left\{a^{ij}+\eta b^ib^j+\eta_0\a^{-1}(b^iy^j+b^jy^i)+\eta_1\a^{-2}y^iy^j\right\},$$
where $(g^{ij})=(g_{ij})^{-1},(a^{ij})=(a_{ij})^{-1},b^i=a^{ij}b_j$,
$$\eta=-\frac{\ptt}{\big(\p-s\pt+(b^2-s^2)\ptt\big)},
\qquad\eta_0=-\frac{(\p-s\pt)\pt-s\p\ptt}{\p\big(\p-s\pt+(b^2-s^2)\ptt\big)},$$
$$\eta_1=\frac{\big(s\p+(b^2-s^2)\pt\big)\big((\p-s\pt)\pt-s\p\ptt\big)}{\p^2\big(\p-s\pt+(b^2-s^2)\ptt\big)}.$$
\end{prop}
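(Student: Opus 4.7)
The plan rests on a single, simple observation: the quantity $b^2 = a^{ij}(x)b_i(x)b_j(x)$ depends only on $x$, not on $y$. Therefore, on each tangent space $T_xM$, the function $F = \gab$ is literally a (special) $\ab$-norm with generating function $\tilde\phi(s) := \phi(b^2(x), s)$. Since the fundamental tensor, its determinant, and its inverse are pointwise-in-$x$ constructions (involving only $y$-derivatives), one can invoke the corresponding formulas for classical $\ab$-metrics as recorded in \cite{css-szm-riem}, with the only bookkeeping change being that $\tilde\phi'(s) = \pt(b^2,s)$ and $\tilde\phi''(s) = \ptt(b^2,s)$, where $\pt$ and $\ptt$ denote partial derivatives in the second variable.

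Concretely, I would proceed as follows. First, record the basic $y$-derivative identities $\a_{y^i} = y_i/\a$ with $y_i = a_{ij}y^j$, $\b_{y^i} = b_i$, and $s_{y^i} = (b_i - s\,\a_{y^i})/\a$. Observing that $\pp{b^2}{y^i}=0$, one differentiates $F = \a\,\phi(b^2,s)$ twice in $y^i,y^j$ and collects terms; the expression for $g_{ij} = \tfrac12 (F^2)_{y^iy^j}$ reproduces the stated formula with the coefficients $\rho, \rho_0, \rho_1$ given as polynomial combinations of $\phi, \pt, \ptt$. Because the $b^2$-slot contributes nothing to these derivatives, the calculation is formally identical to the classical $\ab$-case.

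Next, for the determinant, I would use the standard linear-algebra identity that computes $\det(g_{ij})$ for a matrix of the form $\rho a_{ij}$ plus a low-rank correction built from $b_i$ and $\a_{y^i}$; this is a rank-2 perturbation of $\rho a_{ij}$ and the resulting determinant reduces after some algebra to the stated product. For the inverse matrix $g^{ij}$, the Sherman--Morrison--Woodbury formula applied to the same rank-2 correction yields an expression of the form $\rho^{-1}\{a^{ij} + \eta b^ib^j + \eta_0\a^{-1}(b^iy^j+b^jy^i) + \eta_1\a^{-2}y^iy^j\}$; the scalar coefficients $\eta, \eta_0, \eta_1$ are then determined by matching against $g_{ij}g^{jk} = \delta^k_i$, using $b^ib_i = b^2$, $b^iy_i = \b$, and $y^iy_i = \a^2$.

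The only genuine obstacle is notational rather than conceptual: one must be careful that $\po$ does not appear anywhere in these formulas, because derivatives in $y$ never touch the first argument $b^2$. This is precisely why the $b^2$-dependence is invisible at the level of the fundamental tensor, and why the formulas look identical to the classical $\ab$-case. The real role of $\po$ will only surface later, when one differentiates in $x$ to compute spray coefficients, so no further subtlety arises here.
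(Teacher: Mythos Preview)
Your proposal is correct and matches the paper's own proof essentially verbatim: the paper's entire argument is the observation that $b^2$ is independent of $y$, so the formulas follow immediately from the classical $\ab$-metric case in \cite{css-szm-riem}. Your additional remarks on Sherman--Morrison--Woodbury and the explicit $y$-derivative identities go beyond what the paper spells out, but they are consistent with how those cited formulas are derived in the first place.
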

\begin{proof}
Recall that the fundamental tensor of a Finsler metric $F$ is given
by $g_{ij}=\frac{1}{2}[F^2]_{y^iy^j}$. Note that for a general
$\ab$-metric, the variable $b^2$ is independent of $y$, so one can
get the above formulas immediately from the corresponding ones of
$\ab$-metrics given in \cite{css-szm-riem}.
\end{proof}

\begin{prop}\label{ttt}
Let $M$ be an $n$-dimensional manifold. $F=\gab$ is a Finsler metric
on $M$ for any Riemannian metric $\a$ and 1-form $\b$ with
$\|\b\|_\a<b_o$ if and only if $\p=\p(b^2,s)$ is a positive
$C^\infty$ function satisfying
\begin{eqnarray}\label{ppp}
\p-s\pt>0,\quad\p-s\pt+(b^2-s^2)\ptt>0,
\end{eqnarray}
when $n\geq3$ or
$$\p-s\pt+(b^2-s^2)\ptt>0,$$
when $n=2$, where $s$ and $b$ are arbitrary numbers with $|s|\leq
b<b_o$.
\end{prop}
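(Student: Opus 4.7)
The plan is to reduce the statement to the pointwise setting of classical $\ab$-norms and then invoke the criterion already available in \cite{css-szm-riem}. The key observation is that $b^2 = \|\b\|_\a^2$ depends only on $x \in M$, not on the fibre variable $y \in T_xM$. So if we fix $x$ and write $b_0^2 := b^2(x)$, then the restriction $F|_{T_xM}(y) = \a_x(y)\,\p_x(\b_x(y)/\a_x(y))$, with $\p_x(s) := \p(b_0^2, s)$ a function of the single variable $s$, is literally a (special) $\ab$-norm on $T_xM$. In particular Proposition \ref{fand} specialises at $x$ to the fundamental-tensor formulas for that $\ab$-norm; the derivatives $\po$ do not enter anywhere, precisely because $b^2$ is a fibre constant.

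For the sufficiency direction, I would quote the classical criterion recalled in Section 2: an $\ab$-norm $\a\p_x(\b/\a)$ with $\|\b\|_\a = b_0 < b_o$ is a Minkowski norm iff $\p_x$ is positive and satisfies $\p_x(s) - s\p_x'(s) + (b_0^2 - s^2)\p_x''(s) > 0$ on $|s| \leq b_0$, with the additional inequality $\p_x(s) - s\p_x'(s) > 0$ required when $n \geq 3$. The need for the extra inequality exactly when $n \geq 3$ is dictated by the factor $(\p - s\pt)^{n-2}$ in the determinant formula of Proposition \ref{fand}, which is identically $1$ in dimension two. Translating derivatives via $\p_x'(s) = \pt(b_0^2, s)$ and $\p_x''(s) = \ptt(b_0^2, s)$, the inequalities (\ref{ppp}) (or its $n=2$ variant) are nothing but the assertion that the classical criterion holds on every slice $b^2 = b_0^2$. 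Hence if (\ref{ppp}) holds globally in $(b^2, s)$, then $F$ is a Minkowski norm on each $T_xM$, and therefore a Finsler metric on $M$, for every admissible pair $(\a, \b)$.

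For necessity, I would realise an arbitrary prescribed pair $(b_0^2, s_0)$ with $|s_0| \leq b_0 < b_o$ by the following concrete model: on $M = \RR^n$ take $\a$ to be the standard Euclidean metric and $\b = b_0\, dy^n$, so that $\|\b\|_\a \equiv b_0$ and $\b/\a$ sweeps out the entire interval $[-b_0, b_0]$ as $y$ varies over $T_xM$. The hypothesis that $F$ is a Finsler metric on this particular pair then forces the pointwise criterion at $(b_0^2, s_0)$; arbitrariness of $(b_0^2, s_0)$ gives (\ref{ppp}) in full. I do not foresee a genuine obstacle: the only nontrivial input is the classical positivity criterion for $\ab$-norms, which is already established in \cite{css-szm-riem}, and the remaining ingredients are just the fibre-constancy of $b^2$ together with the elementary realisation above.
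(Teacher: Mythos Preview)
Your reduction is correct and somewhat more conceptual than the paper's own argument. The paper does not invoke the classical $\ab$-criterion from \cite{css-szm-riem} as a black box; instead it reproduces the standard homotopy directly: set $\p_t=(1-t)+t\p$, check that both inequalities in (\ref{ppp}) persist for $\p_t$ on $0\le t\le1$, deduce $\det(g^t_{ij})>0$ throughout, and conclude positive definiteness at $t=1$ from that at $t=0$ (where $F_0=\a$). For necessity it reads the inequalities straight off the determinant formula in Proposition \ref{fand}, using $\p(b^2,0)>0$ and continuity to pin down the sign of $\p-s\pt$ when $n\ge3$. Your route packages the same ingredients by freezing $b^2$ at each point and citing the classical result; this makes it transparent that the general case carries no new analytic content, whereas the paper's version is more self-contained but hides that observation. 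One small caution: what Section~2 actually records is the criterion quantified over all $b<b_o$, while you need the single-$b_0$ version on each tangent space; this is harmless because the homotopy/determinant proof in \cite{css-szm-riem} already operates at a fixed $b_0$, but you should say explicitly which formulation you are invoking. (Also, in your model for necessity, $\b=b_0\,dy^n$ should be $\b=b_0\,dx^n$.)
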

\begin{proof}
The case $n=2$ is similar to $n\geq3$, so it is omitted here.
Suppose that (\ref{ppp}) holds. Consider a family of functions
$\p_t(b^2,s)=1-t+t\p(b^2,s)$. Let
$F_t=\a\p_t\left(b^2,\frac{\b}{\a}\right)$ and
$g^t_{ij}=\frac{1}{2}\left[F_t^2\right]_{y^iy^j}$, then $F_0=\a$ and
$F_1=F$. It is easy to see that for any $0\leq t\leq1$ and $|s|\leq
b<b_o$,
$$\p_t-s(\p_t)_2=1-t+t(\p-s\pt)>0,$$
$$\p_t-s(\p_t)_2+(b^2-s^2)(\p_t)_{22}=1-t+t\big(\p-s\pt+(b^2-s^2)\ptt\big)>0.$$
Thus $\det(g^t_{ij})>0$ for all $0\leq t\leq1$. Since $(g^0_{ij})$
is positive definite, we conclude that $(g^t_{ij})$ is positive
definite for any $t\in[0,1]$. Therefore, $F_t$ is a Finsler metric
for any $t\in[0,1]$.

Conversely, assume that $F=\gab$ is a Finsler metric for any
Riemannian metric $\a$ and 1-form $\b$ with $b<b_o$. Then
$\phi(b^2,s)$ is positive. By Proposition \ref{fand},
$\det(g_{ij})>0$ is equivalent to
$$(\p-s\pt)^{n-2}\big(\p-s\pt+(b^2-s^2)\ptt\big)>0,$$
which implies $\p-s\pt\neq0$ when $n\geq3$. Since $\p(b^2,0)>0$, the
previous inequality implies that the first inequality in (\ref{ppp})
holds. The second one also holds because $\det(g_{ij})>0$.
\end{proof}
\begin{rmk}
Note that the second inequality in (\ref{ppp}) doesn't imply the
first one, even though it does for special $\ab$-metrics(cf.
\cite{css-szm-riem}).
\end{rmk}

Let $b_{i|j}$ denote the coefficients of the covariant derivative of
$\b$ with respect to $\a$. Let
$$r_{ij}=\frac{1}{2}(b_{i|j}+b_{j|i}),~s_{ij}=\frac{1}{2}(b_{i|j}-b_{j|i}),
~r_{00}=r_{ij}y^iy^j,~s^i{}_0=a^{ij}s_{jk}y^k,$$
$$r_i=b^jr_{ji},~s_i=b^js_{ji},~r_0=r_iy^i,~s_0=s_iy^i,~r^i=a^{ij}r_j,~s^i=a^{ij}s_j,~r=b^ir_i.$$
It is easy to see that $\b$ is closed if and only if $s_{ij}=0$.

\begin{prop}\label{prop:G}
For a general $(\alpha,\beta)$-metric $F=\gab$, its spray
coefficients $G^i$ are related to the spray coefficients $G^i_\a$ of
$\a$ by
\begin{eqnarray*}
G^i&=& G^i_\a+\a Q s^i{}_0+\left\{\Theta(-2\a Q s_0+r_{00}+2\a^2
R r)+\a\Omega(r_0+s_0)\right\}\frac{y^i}{\a}\\
&&+\left\{\Psi(-2\a Q s_0+r_{00}+2\a^2 R
r)+\a\Pi(r_0+s_0)\right\}b^i -\a^2 R(r^i+s^i),
\end{eqnarray*}
where
$$Q=\frac{\pt}{\p-s\pt},\quad R=\frac{\po}{\p-s\pt},$$
$$\Theta=\frac{(\p-s\pt)\pt-s\p\ptt}{2\p\big(\p-s\pt+(b^2-s^2)\ptt\big)},
\quad\Psi=\frac{\ptt}{2\big(\p-s\pt+(b^2-s^2)\ptt\big)},$$
$$\Pi=\frac{(\p-s\pt)\pye-s\po\ptt}{(\p-s\pt)\big(\p-s\pt+(b^2-s^2)\ptt\big)},\quad
\Omega=\frac{2\po}{\p}-\frac{s\p+(b^2-s^2)\pt}{\p}\Pi.$$
\end{prop}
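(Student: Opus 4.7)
The strategy is to adapt the classical derivation of spray coefficients for ordinary $(\alpha,\beta)$-metrics, recorded in \cite{css-szm-riem}, to the present setting. The only new feature is that $\phi(b^2,s)$ depends on the base point $x$ through both $s=\beta/\alpha$ and $b^2=\|\beta\|_\alpha^2$. The starting identity is
$$2G^i = g^{il}\bigl\{[F^2]_{x^k y^l}y^k-[F^2]_{x^l}\bigr\},$$
and the single new ingredient compared with the classical case is that, because $b^2$ is a scalar on $M$, its ordinary derivative equals its covariant derivative, yielding
$$\frac{\partial b^2}{\partial x^l} = 2b^j b_{j|l} = 2(r_l+s_l).$$

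I would expand $F^2=\alpha^2\phi(b^2,s)^2$ via the chain rule and split $[F^2]_{x^ky^l}y^k-[F^2]_{x^l}$ into a classical part, not involving $\phi_1$ or $\phi_{12}$, and a new part that does. The classical part, after contraction with $g^{il}$ from Proposition~\ref{fand}, reproduces the familiar three summands $G^i_\alpha+\alpha Qs^i{}_0+\Theta(r_{00}-2\alpha Qs_0)y^i/\alpha+\Psi(r_{00}-2\alpha Qs_0)b^i$. The new part contains the source scalars $\alpha^2\phi\phi_1(r_l+s_l)$ and $\alpha\phi\phi_{12}(r_0+s_0)\alpha_{y^l}$, together with the derivatives that appear after applying $\partial_{y^l}y^k$ and contracting.

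Applying each of the four structural pieces of $g^{il}$ (the $a^{il}$, $b^ib^l$, mixed $b^iy^l+b^ly^i$, and $y^iy^l$ pieces) to the $\phi_1$-contribution produces four basic tensors: the vector $r^i+s^i$ from $a^{il}(r_l+s_l)$; the scalar $r=b^l(r_l+s_l)$ attached to $b^i$ or $y^i$ (using $b^ls_l=0$ from the antisymmetry of $s_{ij}$); and the scalar $(r_0+s_0)/\alpha=\alpha^{-1}y^l(r_l+s_l)$ attached to $b^i$ or $y^i$. After collecting coefficients with the help of the explicit forms of $\eta,\eta_0,\eta_1$, these assemble exactly into the extra $-\alpha^2 R(r^i+s^i)$, into the $2\alpha^2 Rr$ augmentation of the scalar inside both $\Theta\{\cdots\}$ and $\Psi\{\cdots\}$, and into the new $\alpha\Pi(r_0+s_0)b^i$ and $\alpha\Omega(r_0+s_0)y^i/\alpha$ terms.

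The main obstacle is the bookkeeping: each of the four structural pieces of $g^{il}$ couples to every $\phi_1$-source term, generating numerous cross-contributions that must collapse into the stated compact form. In particular, the relation $\Omega=2\phi_1/\phi-(s\phi+(b^2-s^2)\phi_2)\Pi/\phi$ is a compatibility identity forced by the requirement that the $y^iy^l$ and mixed $b^iy^l$ pieces of $g^{il}$ combine into a single $y^i$-coefficient; recognizing it at the outset substantially shortens the verification.
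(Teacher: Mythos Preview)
Your proposal is correct and follows essentially the same route as the paper: split $G^i$ into a classical part (free of $\phi_1,\phi_{12}$), which is read off from the known $(\alpha,\beta)$-metric formula by replacing $\phi'\mapsto\phi_2$, $\phi''\mapsto\phi_{22}$, and a new part coming from the $b^2$-dependence, which is computed by contracting the $\phi_1$- and $\phi_{12}$-sources against the four structural pieces of $g^{il}$ from Proposition~\ref{fand} using $[b^2]_{x^l}=2(r_l+s_l)$. The paper organizes the second step slightly differently, writing the new contribution as $g^{il}\{Ay_l+Bb_l+C(r_l+s_l)\}=\rho^{-1}\{Dy^i+Eb^i+C(r^i+s^i)\}$ and then expanding $D,E$ via $\eta,\eta_0,\eta_1$, but this is exactly the bookkeeping you describe.
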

\begin{proof}
Recall that the spray coefficients of a Finsler metric $F$ are given
by
$$G^i=\frac{1}{4}g^{il}\left\{\left[F^2\right]_{x^ky^l}y^k-\left[F^2\right]_{x^l}\right\}.$$

For the general $\ab$-metric $F=\gab$, direct computations yield
\begin{eqnarray*}
\left[F^2\right]_{x^k}&=&[\a^2]_{x^k}\p^2+2\a^2\p\po[b^2]_{x^k}+2\a^2\p\pt s_{x^k},\\
\left[F^2\right]_{x^ky^l}&=&[\a^2]_{x^ky^l}\p^2+2[\a^2]_{x^k}\p\pt
s_{y^l}+2[\a^2]_{y^l}\p\po[b^2]_{x^k}\\
&&+2\a^2\po\pt[b^2]_{x^k}s_{y^l}+2\a^2\p\pye[b^2]_{x^k}s_{y^l}+2[\a^2]_{y^l}\p\pt s_{x^k}\\
&&+2\a^2(\pt)^2s_{x^k}s_{y^l}+2\a^2\p\ptt
s_{x^k}s_{y^l}+2\a^2\p\pt s_{x^ky^l}.
\end{eqnarray*}
Set $G^i=G^i_1+G^i_2$, where $G^i_1$ includes $\po$ and $\pye$ but
$G^i_2$ does not, i.e.,
\begin{eqnarray}
G^i_1&=&\frac{1}{2}g^{il}\Big\{[\a^2]_{y^l}\p\po[b^2]_{x^k}y^k+\a^2\po\pt[b^2]_{x^k}y^ks_{y^l}\nonumber\\
&&+\a^2\p\pye[b^2]_{x^k}y^ks_{y^l}-\a^2\p\po[b^2]_{x^l}\Big\}.\label{G1}
\end{eqnarray}

It is easy to see that $G^i_2$ can be obtained immediately by
exchanging $\phi'$ for $\pt$ and $\phi''$ for $\ptt$ in the spray
coefficients of $\ab$-metrics which can be found in
\cite{css-szm-riem}. So
\begin{eqnarray*}
G^i_2=G_\a^i+\a Q s^i{}_0+\Theta\left\{-2\a Qs_0+r_{00}\right\}
\frac{y^i}{\a}+\Psi\left\{-2\a Qs_0+r_{00}\right\}b^i.
\end{eqnarray*}

In order to compute $G^i_1$, we need the following simple facts:
\begin{eqnarray}\label{fact}
[\a^2]_{y^l}=2y_l,\quad[b^2]_{x^l}=2(r_l+s_l),\quad s_{y^l}=\frac{\a
b_l -sy_l}{\a^2},
\end{eqnarray}
where $y_l=a_{lt}y^t$.\\
By (\ref{G1}) and (\ref{fact}), we have
\begin{eqnarray*}
G^i_1=g^{il}\big\{A y_l+B b_l+C(r_l+s_l)\big\}:=\rho^{-1}\big\{D
y^i+E b^i+F(r^i+s^i)\big\},
\end{eqnarray*}
where
\begin{eqnarray*}
&A=(2\p\po-s\po\pt-s\p\pye)(r_0+s_0),&\\
&B=\a(\po\pt+\p\pye)(r_0+s_0),\quad C=-\a^2\p\po,&
\end{eqnarray*}
and by Proposition \ref{fand},
\begin{eqnarray*}
D&=&A+(As+\a^{-1}Bb^2+\a^{-1}Cr)\eta_0+\big\{A+\a^{-1}Bs+\a^{-2}C(r_0+s_0)\big\}\eta_1,\\
E&=&B+(\a As+Bb^2+Cr)\eta+\big\{\a
A+Bs+\a^{-1}C(r_0+s_0)\big\}\eta_0,\\
F&=&C.
\end{eqnarray*}
Plugging $\eta,\eta_0,\eta_1,A,B,C$ into the above equalities yields
\begin{eqnarray*}
D&=&\Bigg\{\left[2(\p-s\pt)+\frac{s\ptt\big(s\p+(b^2-s^2)\pt\big)}{\p-s\pt+(b^2-s^2)\ptt}\right]
\po\\
&&-\frac{(\p-s\pt)\big(s\p+(b^2-s^2)\pt\big)}{\p-s\pt+(b^2-s^2)\ptt}
\pye\Bigg\}(r_0+s_0)\\
&&+\frac{(\p-s\pt)\pt-s\p\ptt}{\p-s\pt+(b^2-s^2)\ptt}\po\a r,\\
E&=&\Bigg\{\frac{\p(\p-s\pt)}{\p-s\pt+(b^2-s^2)\ptt}\pye-\frac{s\p\ptt}{\p-s\pt+(b^2-s^2)\ptt}
\po\Bigg\}\a(r_0+s_0)\\
&&+\frac{\p\ptt}{\p-s\pt+(b^2-s^2)\ptt}\po\a^2r.
\end{eqnarray*}
One can obtain the spray coefficients $G^i$ by the above
equalities.
\end{proof}

\section{Some constructions of projectively flat general
$\ab$-metrics}

Bryant's metrics (\ref{bryant}) contain some general $\ab$-metrics.
In order to see that, let us take $p_1=p_2=\cdots=p_{n-1}=0,p_n=p$.
Then (\ref{bryant}) is given in the following form in some
appropriate coordinate by stereographic projection,
$$F=\Re\frac{\sqrt{(e^{ip}+|x|^2)|y|^2-\langle x,y\rangle^2}-i\langle x,y\rangle}{e^{ip}+|x|^2}.$$
If we take $p_1=p_2=\cdots=p_n=p$, then (\ref{bryant}) is given by
$$F=\Re\frac{\sqrt{(e^{-ip}+|x|^2)|y|^2-\langle x,y\rangle^2}-i\langle x,y\rangle}{e^{-ip}+|x|^2}.$$
So it is natural to consider the general $\ab$-metrics in the form
(\ref{eqn:bry}).

\begin{lem}
$F=\Re\frac{\sqrt{(e^{ip}+b^2)\a^2-\b^2}-i\b}{e^{ip}+b^2}$ is a
Finsler metric if and only if $b<b_o$, where
\begin{eqnarray*}
b_o=\left\{\begin{array}{ll}
+\infty&\qquad\mbox{if}\quad|p|\leq\frac{\pi}{2},\\
\sqrt{\frac{1}{2}\sec(\frac{2\pi}{3}-\frac{|p|}{3})}&\qquad\mbox{if}\quad\frac{\pi}{2}<|p|<\pi.
\end{array}\right.
\end{eqnarray*}
\end{lem}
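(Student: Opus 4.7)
The plan is to invoke Proposition~\ref{ttt} for $\phi(b^2,s)=\Re\frac{\sqrt{c-s^2}-is}{c}$ with $c=e^{ip}+b^2$, so I need $\phi>0$, $\phi-s\pt>0$ (when $n\geq 3$), and $\phi-s\pt+(b^2-s^2)\ptt>0$ for all $|s|\leq b$. My first move is to rationalize, noting that
$$\frac{\sqrt{c-s^2}-is}{c}=\frac{1}{\sqrt{c-s^2}+is},$$
so $\phi=\Re\bigl(1/(\sqrt{c-s^2}+is)\bigr)$. Then, writing $f=\sqrt{c-s^2}$ as an analytic function of $s$ (so $f_s=-s/f$ and $f_{ss}=-c/f^3$) and using that $c$ factors out of the derivatives, a two-line computation gives the compact identities
$$\phi-s\pt=\Re\frac{1}{\sqrt{c-s^2}},\qquad \phi-s\pt+(b^2-s^2)\ptt=\Re\frac{e^{ip}}{(c-s^2)^{3/2}}.$$

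Next I would parametrize $c-s^2=re^{i\theta}$ with $\theta\in(-\pi,\pi)$ the principal argument, so the three quantities take the trigonometric form
$$\phi=\frac{\sqrt{r}\cos(\theta/2)}{|\sqrt{c-s^2}+is|^2},\qquad \phi-s\pt=\frac{\cos(\theta/2)}{\sqrt{r}},\qquad \phi-s\pt+(b^2-s^2)\ptt=\frac{\cos(p-3\theta/2)}{r^{3/2}}.$$
Since $\Im(c-s^2)=\sin p$, the point $c-s^2$ never lies on the negative real axis for $0<|p|<\pi$ (and is positive real when $p=0$), so $|\theta|<\pi$. Hence the first two quantities are automatically positive, and the only nontrivial requirement from Proposition~\ref{ttt} is $|p-3\theta/2|<\pi/2$, to be verified uniformly in $|s|\leq b$.

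The decisive geometric step is to locate the range of $\theta$ as $s$ varies. Assuming $p\geq 0$ by symmetry and letting $u=b^2-s^2\in[0,b^2]$, one checks that $\theta=\arg(\cos p+u+i\sin p)$ is a strictly decreasing function of $u$, moving from $\theta=p$ at $u=0$ (i.e.\ $|s|=b$) down to $\theta^*:=\arg(e^{ip}+b^2)$ at $u=b^2$ (i.e.\ $s=0$). Hence $p-3\theta/2$ sweeps $[-p/2,\,p-3\theta^*/2]$. The lower bound is harmless since $p<\pi$, so the binding condition is $p-3\theta^*/2<\pi/2$, i.e.\ $\theta^*>(2p-\pi)/3$. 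For $|p|\leq\pi/2$ the right-hand side is $\leq 0<\theta^*$, giving $b_o=+\infty$; for $\pi/2<p<\pi$, the map $b^2\mapsto\theta^*(b^2)$ strictly decreases from $p$ to $0$, so the critical $b_o^2$ is determined by $\theta^*(b_o^2)=(2p-\pi)/3$.

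The final task is to unpack this equation. From $\tan\tfrac{2p-\pi}{3}=\sin p/(\cos p+b_o^2)$ I obtain
$$b_o^2=\sin p\cot\tfrac{2p-\pi}{3}-\cos p=\frac{\sin\tfrac{p+\pi}{3}}{\sin\tfrac{2p-\pi}{3}},$$
and the product-to-sum identity
$$2\sin\tfrac{p+\pi}{3}\cos\tfrac{2\pi-p}{3}=\sin\pi+\sin\tfrac{2p-\pi}{3}=\sin\tfrac{2p-\pi}{3}$$
collapses this to $b_o^2=\tfrac12\sec(\tfrac{2\pi}{3}-\tfrac{p}{3})$, matching the stated formula after restoring absolute values. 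The main obstacle is the complex-analytic bookkeeping: keeping the principal branch of the square root consistent with the paper's convention and verifying the monotonicity of $\theta$ in $u$; once those are pinned down, only calibrated trigonometry remains.
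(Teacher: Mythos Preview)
Your proposal is correct and follows essentially the same route as the paper: define the complex function $\Phi$, derive the identities $\Phi-s\Phi_2=(e^{ip}+b^2-s^2)^{-1/2}$ and $\Phi-s\Phi_2+(b^2-s^2)\Phi_{22}=e^{ip}(e^{ip}+b^2-s^2)^{-3/2}$, and then reduce the Finsler condition to the angular inequality $|p-\tfrac{3}{2}\theta|<\tfrac{\pi}{2}$ with $\theta=\arg(e^{ip}+b^2-s^2)$. The paper stops at the implicit equation $\arg\bigl(e^{ip}(e^{ip}+b_o^2)^{-3/2}\bigr)=\tfrac{\pi}{2}$ and asserts its solution is the stated $b_o$; you actually carry out that trigonometric step, which is a welcome addition and your computation $b_o^2=\sin\tfrac{p+\pi}{3}/\sin\tfrac{2p-\pi}{3}=\tfrac{1}{2}\sec(\tfrac{2\pi}{3}-\tfrac{p}{3})$ is correct.
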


\begin{proof}
There is no need to be discussed when $p=0$, because in this case
$F=\frac{\sqrt{(1+b^2)\a^2-\b^2}}{1+b^2}$ is just a Riemannian
metric.

Define a complex-valued function $\Phi(b^2,s)$ by
\begin{eqnarray}\label{eqn:Phi}
\Phi(b^2,s)=\frac{\sqrt{e^{ip}+b^2-s^2}-is}{e^{ip}+b^2}=\frac{1}{\sqrt{e^{ip}+b^2-s^2}+is},
\end{eqnarray}
then $\phi(b^2,s)$ is the real part of $\Phi$. Direct computations
yield
\begin{eqnarray}\label{eqn:PP}
\Phi-s\Phi_2=\frac{1}{(e^{ip}+b^2-s^2)^\frac{1}{2}},\label{eqn:P1}\\
\Phi-s\Phi_2+(b^2-s^2)\Phi_{22}=\frac{e^{ip}}{(e^{ip}+b^2-s^2)^\frac{3}{2}}.\label{eqn:P2}
\end{eqnarray}
When $0<p<\pi$, it is easy to see that the argument of
$e^{ip}+b^2-s^2$, denoted by $\theta$, satisfies $0<\theta\leq p$
since $b^2-s^2\geq0$. We conclude $\p$ and $\p-s\pt$ are positive
because the arguments of $\Phi$ and $\Phi-s\Phi_2$ belong to the
interval $(-\frac{\pi}{2},\frac{\pi}{2})$.

On the other hand,
$$\arg\left(\Phi-s\Phi_2+(b^2-s^2)\Phi_{22}\right)=p-\frac{3}{2}\theta,$$
so $\p-s\pt+(b^2-s^2)\ptt$ is positive when $p\leq\frac{\pi}{2}$. In
other words, $b_o=+\infty$ when $0<p\leq\frac{\pi}{2}$.

In the case $p>\frac{\pi}{2}$, $\p-s\pt+(b^2-s^2)\ptt$ is not always
positive because $\theta$ may be very small. Let $b_o$ be the
largest number such that for all $|s|\leq b<b_o$,
$\p-s\pt+(b^2-s^2)\ptt>0$. Then $b_o$ must be the solution, which is
given in the lemma, of the following equation,
$$\arg\frac{e^{ip}}{(e^{ip}+b_o^2)^\frac{3}{2}}=\frac{\pi}{2}.$$

We can finish the proof by the similar argument for the case
$-\pi<p<0$.
\end{proof}
\begin{rmk}
By the above lemma, Bryant's metrics (\ref{bryant}) do not always
define on the whole sphere. This conclusion have been confirmed by
R. Bryant. That is to say, in order to ensure the regularity of
(\ref{bryant}) on the whole sphere, some more conditions on the
parameters $p_i(1\leq i\leq n)$ should be provided.
\end{rmk}

\begin{proof}[Proof of Theorem \ref{thm-2}]
Since $\a$ is locally projectively flat, we can assume that
$G^i_\a=\theta y^i$ in some local coordinate system $(\mathcal
U;x^i)$, where $\theta=\theta_i(x)y^i$ is a 1-form on $\mathcal U$.
On the other hand, $b_{i|j}=c(x)a_{ij}$ for some function $c(x)$
because $\b$ is closed and conformal with respect to $\a$. It is
obvious that
\begin{eqnarray}\label{tu}
r_{00}=c\a^2,r_0=c\b,r=cb^2,r^i=cb^i,s^i{}_0=0,s_0=0,s^i=0.
\end{eqnarray}
Substituting (\ref{tu}) into the spray coefficients in Proposition
\ref{prop:G} yields
\begin{eqnarray*}
G^i&=&\left\{\theta+c\a[\Theta(1+2Rb^2)+s\Omega]\right\}y^i+c\a^2\left\{\Psi(1+2Rb^2)+s\Pi-R\right\}b^i\\
&=&\left\{\theta+c\a\left[\frac{\pt+2s\po}{2\p}
-\frac{\big(\ptt-2(\po-s\pye)\big)\big(s\p+(b^2-s^2)\pt\big)}
{2\p\big(\p-s\pt+(b^2-s^2)\ptt\big)}\right]\right\}y^i\\
&&+c\a^2\left\{\frac{\ptt-2(\po-s\pye)}{2\big(\p-s\pt+(b^2-s^2)\ptt\big)}\right\}b^i.
\end{eqnarray*}
So the spray coefficients are given by
\begin{eqnarray}\label{GGG}
G^i=\left\{\theta+c\a\frac{\pt+2s\po}{2\p}\right\}y^i
\end{eqnarray}
if $\p$ satisfies the first condition of Theorem \ref{thm-2}. Recall
that a Finsler metric is projectively flat if and only if its spray
coefficients are in the form $G^i=Py^i$\cite{css-szm-riem}.
Therefore $F$ is projectively flat on $\mathcal U$.
\end{proof}

\begin{proof}[Proof of Theorem \ref{thm-3}]
The function $\Phi(b^2,s)$ is defined by (\ref{eqn:Phi}).
Differentiating (\ref{eqn:P1}) with respect to $b^2$ yields
$$\Phi_1-s\Phi_{12}=-\frac{1}{2(e^{ip}+b^2-s^2)^\frac{3}{2}}.$$
So by the above equality and (\ref{eqn:P2}), $\Phi$ satisfies the
following equality,
$$\Phi_{22}=2(\Phi_1-s\Phi_{12}).$$
The same relation is true for $\p$ by taking the real parts of the
above equality.

On the other hand, set $\varrho=\sqrt{1+\mu|x|^2}$, then the
Christoffel symbols of (\ref{a}) are given by
$\Gamma^k{}_{ij}=-\varrho^{-2}\mu(x^i\delta^k{}_j+x^j\delta^k{}_i)$,
and
\begin{eqnarray*}
b_i&=&\varrho^{-3}\lambda x^i+\varrho^{-1}a^i-\varrho^{-3}\mu\langle a,x\rangle x^i,\\
\pp{b_i}{x^j}&=&\varrho^{-3}\lambda\delta_{ij}-3\varrho^{-5}\mu\lambda
x^ix^j-\varrho^{-3}\mu a^ix^j\\
&&-\varrho^{-3}\mu\langle
a,x\rangle\delta_{ij}-\varrho^{-3}\mu
a^jx^i+3\varrho^{-5}\mu^2\langle a,x\rangle x^ix^j,\\
b_{i|j}&=&\pp{b_i}{x^j}-b_k\Gamma^k{}_{ij}\\
&=&\varrho^{-3}(\lambda-\mu\langle
a,x\rangle)\delta_{ij}-\varrho^{-5}(\lambda-\mu\langle
a,x\rangle)\mu x^ix^j.
\end{eqnarray*}
The last equality implies $s_{ij}=0$ and
$r_{ij}=\varrho^{-1}(\lambda-\mu\langle a,x\rangle)a_{ij}$. So $\b$
is closed and conformal with respect to $\a$ with conformal factor
$c(x)=\varrho^{-1}(\lambda-\mu\langle a,x\rangle)$.

Moreover, the spray coefficients of $F$ are given by
$$G^i=\left\{-\frac{\mu\langle x,y\rangle}{1+\mu|x|^2}+\frac{(\lambda-\mu\langle
a,x\rangle)}{\sqrt{1+\mu|x|^2}}
\Im\frac{\sqrt{(e^{ip}+b^2)\a^2-\b^2}-i\b}{e^{ip}+b^2}\right\}y^i,$$
which are obtained by the simple equality $\Phi_2+2s\Phi_1=-i\Phi^2$
and (\ref{GGG}).
\end{proof}

\begin{example}
Take $\lambda=1,a=0$ in Theorem \ref{thm-3}, then the following
general $\ab$-metrics are projectively flat for $-\frac{\pi}{2}\leq
p\leq\frac{\pi}{2}$:
$$F=\Re\frac{\sqrt{(e^{ip}+|x|^2+\mu e^{ip}|x|^2)|y|^2-(1+\mu e^{ip})\langle
x,y\rangle^2} -\frac{i\langle
x,y\rangle}{\sqrt{1+\mu|x|^2}}}{e^{ip}+|x|^2+\mu e^{ip}|x|^2}.$$
\end{example}

\begin{example}\label{ex2}
It is easy to verify that the function
$\p(b^2,s)=(\sqrt{1+b^2}+s)^2$ satisfies the first condition of
Theorem \ref{thm-2}. Take $\lambda=1,a=0$, then the following
general $\ab$-metrics are projectively flat:
$$F=\frac{(\sqrt{1+(1+\mu)|x|^2}\sqrt{(1+\mu|x|^2)|y|^2-\mu\langle x,y\rangle^2}+\langle x,y\rangle)^2}
{(1+\mu|x|^2)^2\sqrt{(1+\mu|x|^2)|y|^2-\mu\langle x,y\rangle^2}}.$$
In particular, $F$ is the Berwald's metric when $\mu=-1$.
\end{example}

\section{Some discussions about the PDE}

In this section, we will discuss some interesting properties about
the partial differential equation
\begin{eqnarray}\label{pde}
\ptt=2(\po-s\pye).
\end{eqnarray}
We will always assume $\lambda=1$ and $a=0$ in Theorem \ref{thm-3}
in this section. In this case, $\a$ and $\b$ are given by
$$\a_\mu=\frac{\sqrt{(1+\mu|x|^2)|y|^2-\mu\langle x,y\rangle^2}}{1+\mu|x|^2},
\qquad\b_\mu=\frac{\langle x,y\rangle}{(1+\mu|x|^2)^\frac{3}{2}}.$$
It is easy to verify that
$b^2_\mu:=\|\b_\mu\|^2_{\a_\mu}=\frac{|x|^2}{1+\mu|x|^2}$.

For any solution $\p$ of (\ref{pde}) satisfying Proposition
\ref{ttt}, $F=\a_\mu\p\left(b^2_\mu,\frac{\b_\mu}{\a_\mu}\right)$ is
a projectively flat general $\ab$-metric for any constant $\mu$ by
Theorem \ref{thm-2}. It is easy to see that such a metric can always
be rewrote as the form
\begin{eqnarray}
F=|y|\p_\mu\left(|x|^2,\frac{\langle x,y\rangle}{|y|}\right),
\end{eqnarray}
where the function $\p_\mu$ is given by
\begin{eqnarray}\label{T}
\p_\mu(b^2,s)=\frac{\sqrt{1+\mu(b^2-s^2)}}{1+\mu
b^2}\p\left(\frac{b^2}{1+\mu b^2},\frac{s} {\sqrt{1+\mu
b^2}\sqrt{1+\mu(b^2-s^2)}}\right).
\end{eqnarray}
In particular, $\p_0=\p$.

(\ref{T}) defines a family of transformations $\{\mathcal T_\mu\}$
by $\p_\mu=\mathcal T_\mu(\p)$. Such a family of transformations
become a transformation group of the solution space of (\ref{pde})
by the following proposition.

\begin{prop}\label{group}
For any solution $\p(b^2,s)$ of (\ref{pde}), the following facts
hold:
\begin{enumerate}
\item
$\p_\mu=\mathcal T_\mu(\p)$ is also a solution of (\ref{pde}) for
any constant $\mu$;
\item
$\mathcal T_0(\p)=\p$;
\item
$\mathcal T_\mu\circ\mathcal T_\nu(\p)=\mathcal T_{\mu+\nu}(\p)$.
\end{enumerate}
\end{prop}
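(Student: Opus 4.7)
My plan is to dispose of the second assertion at sight, handle the third by an elementary algebraic calculation on the building blocks of (\ref{T}), and obtain the first by a geometric argument that sidesteps any direct differentiation of the cumbersome formula (\ref{T}).

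Assertion 2 is immediate: substituting $\mu=0$ in (\ref{T}) reduces the prefactor and both arguments of $\p$ to the identity map.

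For assertion 3, I would write $\mathcal{T}_\mu(\p)(b^2,s) = C_\mu\,\p(B_\mu, S_\mu)$ with
\[ B_\mu := \frac{b^2}{1+\mu b^2},\qquad S_\mu := \frac{s}{\sqrt{1+\mu b^2}\,\sqrt{1+\mu(b^2-s^2)}},\qquad C_\mu := \frac{\sqrt{1+\mu(b^2-s^2)}}{1+\mu b^2}, \]
and expand $\mathcal{T}_\mu\circ\mathcal{T}_\nu$ in these pieces. The M\"obius-type identity $B_\nu(B_\mu(b^2))=B_{\mu+\nu}(b^2)$ is immediate, and a small computation yields the pleasant relation
\[ B_\mu - S_\mu^2 \;=\; \frac{b^2-s^2}{1+\mu(b^2-s^2)}, \]
showing that $b^2-s^2$ transforms in exactly the same M\"obius way as $b^2$. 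Combined with $1+\nu B_\mu = (1+(\mu+\nu)b^2)/(1+\mu b^2)$, these two facts reduce $S_\nu(B_\mu,S_\mu)=S_{\mu+\nu}$ and $C_\mu\cdot C_\nu(B_\mu,S_\mu)=C_{\mu+\nu}$ to clean bookkeeping, and assertion 3 follows.

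For assertion 1, the transformation (\ref{T}) is engineered precisely so that the same Finsler metric $F$ on $\mathbb{B}^n(r_\mu)$ admits the two expressions
\[ \a_\mu\,\p\!\left(b^2_\mu,\, \tfrac{\b_\mu}{\a_\mu}\right) \;=\; |y|\,\p_\mu\!\left(|x|^2,\, \tfrac{\langle x,y\rangle}{|y|}\right), \]
one through the constant-curvature metric $\a_\mu$ with its conformal 1-form $\b_\mu$, the other through the Euclidean metric $\a_0=|y|$ with its standard conformal 1-form $\b_0=\langle x,y\rangle$. Since $\p$ solves (\ref{pde}), Theorem \ref{thm-2} applied to the left-hand representation (whose hypotheses on $\a_\mu,\b_\mu$ are verified in the proof of Theorem \ref{thm-3}) makes $F$ projectively flat. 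Now apply Proposition \ref{prop:G} to the right-hand representation: here $G^i_{\a_0}=0$ and $\b_0$ is closed and conformal with factor $c=1$, so the spray-coefficient derivation in the proof of Theorem \ref{thm-2} runs unchanged and produces
\[ G^i \;=\; \{\,\cdots\,\}y^i \;+\; |y|^2\,\frac{(\p_\mu)_{22} - 2\big((\p_\mu)_1 - s(\p_\mu)_{12}\big)}{2\big(\p_\mu - s(\p_\mu)_2 + (b^2-s^2)(\p_\mu)_{22}\big)}\,b^i. \]
Projective flatness of $F$ forces this $b^i$-coefficient to vanish identically; since the denominator is nonzero by Proposition \ref{ttt}, so does the numerator, and that is exactly the PDE (\ref{pde}) for $\p_\mu$.

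The main obstacle I anticipate is conceptual rather than computational: running the sufficient-condition argument of Theorem \ref{thm-2} in reverse. The essential point, already visible in its proof, is that the $b^i$-component of $G^i$ factors so cleanly that the vanishing of its numerator---i.e.\ the PDE---is forced the moment the conformal factor is nonzero, and that is automatic in the Euclidean representation used above.
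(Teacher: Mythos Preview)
Your treatment of assertions 2 and 3 matches the paper's: both argue by direct substitution into the defining formula (\ref{T}), and your M\"obius-type identities for $B_\mu$ and $B_\mu-S_\mu^2$ are exactly the bookkeeping the paper's expansion of $\mathcal T_\mu(\p_\nu)$ implicitly relies on.

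For assertion 1 your route is genuinely different. The paper proves it by brute-force differentiation of $\tilde\p=A\p(B,S)$: it computes $\tilde\p-s\tilde\p_2$, then $(\tilde\p-s\tilde\p_2)_1$ and $(\tilde\p-s\tilde\p_2)_2$, invokes the PDE for $\p$ in the form $(\p-S\p_S)_S=-2S(\p-S\p_S)_B$, and reduces everything to the two elementary identities $B_1-2SS_1-s^{-1}SS_2=0$ and $2E_1+s^{-1}E_2=0$. Your geometric argument---identify the two representations of the same $F$, transport projective flatness, and read off the PDE from the vanishing $b^i$-coefficient in Proposition~\ref{prop:G}---is cleaner and explains \emph{why} the transformation preserves the equation, whereas the paper's calculation merely verifies that it does.

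There is, however, a real gap you should close. Proposition~\ref{group} is stated for \emph{any} smooth solution $\p$ of (\ref{pde}), while every step of your argument (Theorem~\ref{thm-2}, Proposition~\ref{prop:G}, the nonvanishing of the denominator via Proposition~\ref{ttt}) presupposes that $\p$ satisfies the positivity inequalities (\ref{ppp}) so that $F$ is an honest Finsler metric. The paper's direct computation does not need this. The fix is cheap and you should say it: $\mathcal T_\mu$ is \emph{linear} in $\p$, constants solve (\ref{pde}), and $\mathcal T_\mu(1)=\frac{\sqrt{1+\mu(b^2-s^2)}}{1+\mu b^2}$ is itself a solution (this is noted right after the proposition). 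Hence for arbitrary $\p$ solving (\ref{pde}), pick $K$ large enough that $\p+K$ satisfies (\ref{ppp}) on any given compact set of $(b^2,s)$; your argument then gives the PDE for $\mathcal T_\mu(\p+K)=\mathcal T_\mu(\p)+K\mathcal T_\mu(1)$, and subtracting the known solution $K\mathcal T_\mu(1)$ yields the PDE for $\mathcal T_\mu(\p)$.
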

\begin{proof}
Denote $\p_\mu$ by $\tilde\p$ and set $\tilde\p=A\p(B,S)$ where
\begin{eqnarray*}
&\displaystyle A(b^2,s)=\frac{\sqrt{1+\mu(b^2-s^2)}}{1+\mu b^2},\quad B(b^2,s)=\frac{b^2}{1+\mu
b^2},&\\
&\displaystyle S(b^2,s)=\frac{s}{\sqrt{1+\mu b^2}\sqrt{1+\mu(b^2-s^2)}}.&
\end{eqnarray*}
Then
\begin{eqnarray*}
\tilde\p_2&=&A_2\p(B,S)+AS_2\p_S(B,S)\\
&=&-\frac{\mu s\p(B,S)}{(1+\mu
b^2)\sqrt{1+\mu(b^2-s^2)}}+\frac{\p_S(B,S)}{\sqrt{1+\mu
b^2}\big(1+\mu(b^2-s^2)\big)},\\
\tilde\p-s\tilde\p_2&=&\frac{1}{\sqrt{1+\mu(b^2-s^2)}}\big(\p(B,S)-S\p_S(B,S)\big).
\end{eqnarray*}
Set $E=\frac{1}{\sqrt{1+\mu(b^2-s^2)}}$, then
\begin{eqnarray}
\big(\tilde\p-s\tilde\p_2\big)_1&=&E\big(\p(B,S)-S\p_S(B,S)\big)_BB_1
+E\big(\p(B,S)-S\p_S(B,S)\big)_SS_1\nonumber\\
&&+E_1\big(\p(B,S)-S\p_S(B,S)\big),\label{18}\\
\big(\tilde\p-s\tilde\p_2\big)_2&=&E\big(\p(B,S)-S\p_S(B,S)\big)_SS_2
+E_2\big(\p(B,S)-S\p_S(B,S)\big).\label{19}
\end{eqnarray}
The fact that $\p$ is a solution of (\ref{pde}) yields
\begin{eqnarray}\label{20}
\big(\p(B,S)-S\p_S(B,S)\big)_S=-2S\big(\p(B,S)-S\p_S(B,S)\big)_B.
\end{eqnarray}
Then by (\ref{18}), (\ref{19}) and (\ref{20}) we have
\begin{eqnarray*}
&&2(\tilde\p_1-s\tilde\p_{12})-\tilde\p_{22}\\
&=&2\big(\tilde\p-s\tilde\p_2\big)_1+s^{-1}\big(\tilde\p-s\tilde\p_2\big)_2\\
&=&(2ES_1+s^{-1}ES_2)\big(\p(B,S)-S\p_S(B,S)\big)_S+2EB_1\big(\p(B,S)-S\p_S(B,S)\big)_B\\
&&+(2E_1+s^{-1}E_2)\big(\p(B,S)-S\p_S(B,S)\big)\\
&=&2E(B_1-2SS_1-s^{-1}SS_2)\big(\p(B,S)-S\p_S(B,S)\big)_B\\
&&+(2E_1+s^{-1}E_2)\big(\p(B,S)-S\p_S(B,S)\big)\\
&=&0.
\end{eqnarray*}
The last equality holds because the items $B_1-2SS_1-s^{-1}SS_2$ and
$2E_1+s^{-1}E_2$ are both equal to 0 by direct computations. So (1)
holds.

(2) holds since $\p_0=\p$.

In order to see that (3) is true, we only need to compute $\mathcal
T_\mu(\p_\nu)$. By (\ref{T}) and the definition of $\mathcal T_\mu$,
\begin{eqnarray*}
\mathcal T_\mu(\p_\nu)&=&\frac{\sqrt{1+\nu(b^2-s^2)}}{1+\nu
b^2}\frac{\sqrt{1+\mu\left(\frac{b^2}{1+\nu b^2}-\frac{s^2}{(1+\nu
b^2)\left(1+\nu(b^2-s^2)\right)}\right)}}{1+\mu\frac{b^2}{1+\nu b^2}}\\
&&\p\left(\frac{\frac{b^2}{1+\nu b^2}}{1+\mu\frac{b^2}{1+\nu
b^2}},\frac{\frac{s}{\sqrt{1+\nu
b^2}{\sqrt{1+\nu(b^2-s^2)}}}}{\sqrt{1+\mu\frac{b^2}{1+\nu
b^2}}\sqrt{1+\mu\left(\frac{b^2}{1+\nu b^2}-\frac{s^2}{(1+\nu
b^2)\left(1+\nu(b^2-s^2)\right)}\right)}}\right)\\
&=&\p_{\mu+\nu}(b^2,s),
\end{eqnarray*}
which means $\mathcal T_\mu\circ\mathcal T_\nu(\p)=\mathcal
T_{\mu+\nu}(\p)$.
\end{proof}



Proposition \ref{group} implies a simple fact. If $\tilde\p$ can be
obtained from some solution $\p$ of (\ref{pde}) by some
transformation $\mathcal T_\mu$, then they will offer the same
projectively flat Finsler metrics by Theorem \ref{thm-2}. For
instance, obviously $\p=1$ is a solution of (\ref{pde}), and
$\mathcal T_\mu(1)=\frac{\sqrt{1+\mu(b^2-s^2)}}{1+\mu b^2}$. In this
case,
$$\p_\mu\left(b^2_\nu,\frac{\b_\nu}{\a_\nu}\right)=\p_{\mu+\nu}\left(b^2_0,\frac{\b_0}{\a_0}\right)=\a_{\mu+\nu}$$
are just the Riemannian metrics of constant sectional curvature.

We still don't know how to solve the equation (\ref{pde})
completely, but the following lemma is helpful to get its solutions.

\begin{lem}\label{C}
For any $C^\infty$ functions $f$ and $g$, the following function is
the solution of (\ref{pde}):
\begin{eqnarray}\label{eqn:jie}
\p(b^2,s)=f(b^2-s^2)+2s\int_0^sf'(b^2-\sigma^2)\mathrm{d}\sigma+g(b^2)s.
\end{eqnarray}
\end{lem}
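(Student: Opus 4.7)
The equation $\p_{22}=2(\p_1-s\p_{12})$ is linear in $\p$, so by superposition it suffices to check separately that each of the two summands
\[
\p^{(1)}(b^2,s)=f(b^2-s^2)+2s\int_0^s f'(b^2-\sigma^2)\,d\sigma,\qquad \p^{(2)}(b^2,s)=g(b^2)s
\]
satisfies the equation. I would start with $\p^{(2)}$ because it is immediate: one reads off $\p^{(2)}_{22}=0$ and $\p^{(2)}_1-s\p^{(2)}_{12}=g'(b^2)s-sg'(b^2)=0$, so the identity is trivially true. This is also what tells us we can add the free ``linear-in-$s$'' term with arbitrary coefficient $g(b^2)$.

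The content is in checking $\p^{(1)}$. Here I would introduce the shorthand $I(b^2,s):=\int_0^s f'(b^2-\sigma^2)\,d\sigma$, so that $\p^{(1)}=f(b^2-s^2)+2sI$. By the fundamental theorem of calculus $I_s=f'(b^2-s^2)$, while differentiation under the integral gives $I_{b^2}=\int_0^s f''(b^2-\sigma^2)\,d\sigma$. The key algebraic miracle happens in the computation of $\p^{(1)}_2$: the two contributions
\[
\frac{\partial}{\partial s}f(b^2-s^2)=-2sf'(b^2-s^2)\quad\text{and}\quad 2s\,I_s=2sf'(b^2-s^2)
\]
cancel exactly, leaving $\p^{(1)}_2=2I$. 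Differentiating once more gives $\p^{(1)}_{22}=2I_s=2f'(b^2-s^2)$.

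For the right-hand side, $\p^{(1)}_1=f'(b^2-s^2)+2sI_{b^2}$ and $\p^{(1)}_{12}=\frac{\partial}{\partial s}\p^{(1)}_1=-2sf''(b^2-s^2)+2I_{b^2}+2sI_{b^2,s}$, but we only need the combination $\p^{(1)}_1-s\p^{(1)}_{12}$, and since $s\cdot\p^{(1)}_{12}$ contains the term $2sI_{b^2}$ which cancels the one already present in $\p^{(1)}_1$, this is cleaner to compute as
\[
2(\p^{(1)}_1-s\p^{(1)}_{12})=2f'(b^2-s^2)+4sI_{b^2}-4sI_{b^2}+\text{(terms in }f''\text{)},
\]
and I would organize the remaining $f''$ terms so they vanish identically. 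The result matches $2f'(b^2-s^2)=\p^{(1)}_{22}$.

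The only conceptual obstacle is making sure the differentiation under the integral sign is justified (immediate since $f\in C^\infty$) and keeping track of which $f'$ and $f''$ arguments are evaluated at $b^2-s^2$ versus $b^2-\sigma^2$ inside the integral; once the $\p_2$ computation is carried out with the cancellation made explicit, the rest is pure bookkeeping. By linearity, adding $\p^{(1)}$ and $\p^{(2)}$ yields the claimed solution formula (\ref{eqn:jie}).
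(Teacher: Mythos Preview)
Your proposal is correct and takes the same approach as the paper---direct verification by computing the partial derivatives---except that the paper's proof consists of the single sentence ``It is easy to verify that the above function satisfies (\ref{pde}),'' so you have simply filled in the computation the authors omitted. The one place you leave slightly implicit is the vanishing of the $f''$ terms at the end: it follows from $I_{b^2,s}=f''(b^2-s^2)$ (the fundamental theorem applied to $I_{b^2}$), which gives $4s^2f''(b^2-s^2)-4s^2I_{b^2,s}=0$ and closes the argument.
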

\begin{proof}
It is easy to verify that the above function satisfies
(\ref{pde}).
\end{proof}

Suppose that $\p$ satisfies (\ref{eqn:jie}). Direct computations
show that
$$\p-s\pt=f(t),\qquad\p-s\pt+(b^2-s^2)\ptt=f(t)+2tf'(t),$$
where $t=b^2-s^2\geq0$. Assume that $f(0)>0$, then the inequalities
$\p>0$ and (\ref{ppp}) always hold for $b$ small enough. So one can
construct infinitely many projectively flat general $\ab$-metrics by
Lemma \ref{C}. Some simple examples are given in the following:

\begin{itemize}
\item$f(t)=\frac{1}{\sqrt{1-t}}$,
$$\p(b^2,s)=\frac{\sqrt{1-b^2+s^2}}{1-b^2}+g(b^2)s.$$
In this case, $F$ is of Randers type. In particular, it is the
navigation representation of Randers metrics when
$g(b^2)=-\frac{1}{1-b^2}$(cf. \cite{css-szm-riem}).
\item$f(t)=1+t$,
$$\p(b^2,s)=1+b^2+s^2+g(b^2)s.$$
In particular, it is given by example \ref{ex2} when
$g(b^2)=2\sqrt{1+b^2}$.
\item$f(t)=\sqrt{1-t},$
$$\p(b^2,s)=\sqrt{1-b^2+s^2}-s\ln(\sqrt{1-b^2+s^2}+s)+s\ln\sqrt{1-b^2}+g(b^2)s.$$
\item$f(t)=\sqrt{1+t}$,
$$\p(b^2,s)=\sqrt{1+b^2-s^2}+s\arcsin\frac{s}{\sqrt{1+b^2}}+g(b^2)s.$$
\item$f(t)=\ln(2+t)$,
$$\p(b^2,s)=\ln(2+b^2-s^2)+\frac{s}{\sqrt{2+b^2}}\ln\frac{\sqrt{2+b^2}+s}{\sqrt{2+b^2}-s}+g(b^2)s.$$
\item$f(t)=\ln(2-t)$,
$$\p(b^2,s)=\ln(2-b^2+s^2)-\frac{2s}{\sqrt{2-b^2}}\arctan\frac{s}{\sqrt{2-b^2}}+g(b^2)s.$$
\item$f(t)=1+\arctan t$,
\begin{eqnarray*}
\p(b^2,s)&=&1+\arctan(b^2-s^2)+\frac{s}{\sqrt{1+b^4}\sqrt{2\sqrt{1+b^4}-2b^2}}\\
&&\cdot\Bigg(\frac{1}{2}\left(\sqrt{1+b^4}-b^2\right)\ln\frac{\sqrt{1+b^4}+\sqrt{2\sqrt{1+b^4}+2b^2}s+s^2}
{\sqrt{1+b^4}-\sqrt{2\sqrt{1+b^4}+2b^2}s+s^2}\\
&&+\arctan\left(\sqrt{2\sqrt{1+b^4}+2b^2}s+\sqrt{1+b^4}+b^2\right)\\
&&+\arctan\left(\sqrt{2\sqrt{1+b^4}+2b^2}s-\sqrt{1+b^4}-b^2\right)\Bigg)+g(b^2)s.\\
\end{eqnarray*}
\end{itemize}

Obviously, the general $\ab$-metrics include all the $\ab$-metrics.
But it seems a little difficult to determine whether a general
$\ab$-metric is an $\ab$-metric or not. If $\p=\p(s)$ is independent
of $b^2$, then there is no doubt that $F=\pab$ is an $\ab$-metric.
But if $\p=\p(b^2,s)$, we can't conclude that $F=\gab$ isn't an
$\ab$-metric. For instance, as we know in section 1, the general
$\ab$-metric $F=\frac{(\sqrt{1+\bar b^2}\ba+\bb)^2}{\ba}$ is
actually an $\ab$-metric. So the following problem is still open:

Give an approach to distinguish $\ab$-metrics from general
$\ab$-metrics.\\
{\bf Acknowledgement} The authors thank Professor R. Bryant for his
immediate reply that there is indeed something left out in his paper
\cite{brya-some} when we ask him for suggestion. We also thank
Doctor Libing Huang. Before we introduce the concept of general
$\ab$-metrics, he has studied some Finsler metrics in the form
$F=|y|\p\left(|x|^2,\frac{\langle x,y\rangle}{|y|}\right)$ in a
different way, which are the simplest and most important general
$\ab$-metrics.

\noindent Changtao Yu\\
School of Mathematical Sciences, South China Normal
University\\
Guangzhou, 510631, P. R. China\\
aizhenli@gmail.com
\newline
\newline
\noindent Hongmei Zhu\\
School of Mathematical Sciences, Peking University\\
Beijing, 100871, P. R. China\\
zhuhongmei1981@pku.edu.cn


\begin{thebibliography}{3}
\bibitem{bacs-cxy-szm-curv}
S. B\'{a}cs\'{o}, X. Cheng and Z. Shen, {\em Curvature properties of
$\ab$-metrics}, In ``Finsler Geometry, Sapporo 2005-In Memory of
Makoto Matsumoto", ed. S. Sabau and H. Shimada, Advanced Studies in
Pure Mathematics 48, Mathematical Society of Japan, 2007, 73-110.

\bibitem{db-cr-szm-zerm}
D. Bao and C. Robles and Z. Shen, {\em Zermelo navigation on
Riemannian manifolds}, J. Diff. Geom. {\bf66} (2004), 391-449.

\bibitem{berw-uber}
L. Berwald, {\em\"{U}ber die $n$-dimensionalen Geometrien konstanter
Kr\"{u}mmung}, in denen die Geraden die k\"{u}rzesten sind. Math. Z.
{\bf30} (1929), 449-469.

\bibitem{brya-some}
R. Bryant, {\em Some remarks on Finsler manifolds with constant flag
curvature}, Houston J. Math. {\bf28}, no.2 (2002), 221--262.

\bibitem{brya-proj}
R. Bryant, {\em Projectively flat Finsler 2-spheres of constant flag
curvature}, Selecta Mathematica N.S. {\bf3} (1997), 161-204.

\bibitem{brya-fins}
R. Bryant, {\em Finsler structures on the 2-sphere satisfying
$K=1$}, Finsler geometry, Contemporary Mathematics 196, Amer. Math.
Soc. Providence. RI, 1996, 27-42.

\bibitem{css-szm-riem}
S. S. Chern and Z. Shen, {\em Riemann-Finsler geometry}, World
Scientific, Singapore, 2005.

\bibitem{funk-uber}
P. Funk, {\em \"{U}ber Geometrien bei denen die Geraden die
K\"{u}rzesten sind}, Math. Ann. {\bf101} (1929), 226-237.

\bibitem{mats-oncr}
M. Matsumoto, {\em On $C$-reducible Finsler spaces}, Tensor N.S.
{\bf24} (1972), 29-37.

\bibitem{mxh-hlb-oncu}
X. Mo and L. Huang, {\em On curvature decreasing property of a class
of navigation problems}, Publ. Math. Debrecen {\bf71}, no. 1-2
(2007), 141-163.

\bibitem{rand-onan}
G. Randers, {\em On an asymmetric metric in the four-space of
general relativity}, Phys. Rev. {\bf59} (1941), 195-199.

\bibitem{szm-onpr}
Z. Shen, {\em On projectively flat $\ab$-metrics}, Can. Math. Bull.
{\bf52}, no. 1 (2009), 132-144.

\bibitem{zlf-aloc}
L. Zhou, {\em A local classification of a class of $\ab$ metrics
with constant flag curvature}, Diff. Geom. Appl. {\bf28} (2010),
170-193.
\end{thebibliography}
\end{document}